\newtheorem{thm}{Theorem}[section]
\newtheorem{lem}[thm]{Lemma}
\newtheorem{cor}[thm]{Corollary}
\newtheorem{pro}[thm]{Proposition}
\newtheorem{defn}[thm]{Definition}
\newtheorem{rem}[thm]{Remark}
\newtheorem{ex}[thm]{Example}
\DeclareMathOperator{\hash}{^{\#}}
\DeclareMathOperator{\tr}{\text{trace}}
\DeclareMathOperator{\Ind}{\emph{Ind }}
\newcommand\blfootnote[1]{%
  \begingroup
  \renewcommand\thefootnote{}\footnote{#1}%
  \addtocounter{footnote}{-1}%
  \endgroup
}
\providecommand{\keywords}[1]
{  \small	
  \textbf{\textit{Keywords---}} #1
}
\title{Generalized Inverse Preservers of Hadamard Circulant Majorization}
\author[1]{C. C. Hsu}
\affil[1]{No. 605, Daxue S. Rd., Nanzi District, Kaohsiung City, Taiwan}
\author[2]{P. R. Raickwade}
\author[2]{K. C. Sivakumar\thanks{Corresponding author}}
\affil[2]{Department of Mathematics, Indian Institute of Technology Madras, Chennai, 600036, Tamil Nadu, India.}
 \date{}
\begin{document}
\maketitle
\blfootnote{Email addresses:
mcatch7269@gmail.com (C. C. Hsu),
 pavanraickwade@gmail.com (P. R. Raickwade), kcskumar@iitm.ac.in (K. C. Sivakumar). }
\hrule
\abstract{ Let $T\colon M_n\rightarrow M_n$ preserve Hadamard circulant majorization. In this note, we show that this property is inherited by the three most popular generalized inverses, viz. the Moore-Penrose inverse, the group inverse and the Drazin inverse.}

\keywords{: Hadamard majorization, Hadamard circulant majorization, Linear preserver, Generalized inverses.\\
{\bf AMS Subject Classification 2023}:
	15A09, 15A86
}
\vspace{0.1in}
\hrule
\vspace{0.1in}
\section{Introduction and preliminaries}\label{sec1}

Let $M_n$ be the space of all real square matrices of order $n$ and $\mathbb{N}_n := \{1, \ldots, n\}$. For $i, j \in \mathbb{N}_n$, let $E_{ij}$ be the matrix in $M_n$ whose $(i, j)^\text{th}$ entry is $1$ and $0$ otherwise. A matrix $D \in M_n$ with nonnegative entries is called {\it doubly stochastic} if each of its rows and columns sums to 1. Let
\[ C := \left[\begin{array}{rrrr}
{\bf e_n}, {\bf e_1}, & \cdots, {\bf e_{n-1}}
\end{array} \right], \]
where ${\bf e_i}$ is the $i^\text{th}$ standard basis vector in $\mathbb{R}^n$ i.e. the $i^\text{th}$ coordinate of ${\bf e_i}$ is equal to $1$, while all its other coordinates are $0$. For $i \in \mathbb{N}_n$, we define $C_i = C^i$. Let $\mathscr{C}_n:=\{C_1, \ldots, C_n\}$. A matrix $C \in M_n$ is called {\it circulant doubly stochastic} if it is a convex combination of $C_1, \ldots, C_n$. Circulant doubly stochastic matrices are, evidently, a special case of doubly stochastic matrices. For $j \in \mathbb{N}_n$, let $\sigma_{j}$ be the circulant permutation associated naturally with the circulant permutation matrix $C_j$. More precisely, $\sigma_1 = (\begin{matrix}
1 & 2 & \cdots & n
\end{matrix})$ is an element of the symmetric group $S_n$, and $\sigma_j = \sigma_1^j$ for each $j \in \mathbb{N}_n$. So, for each $j \in \mathbb{N}_n$, $C_j$ can also be written as
\[ C_j = \left[ \begin{array}{c}
({\bf e_{\sigma_j(1)}})^\top \\
\vdots \\
({\bf e_{\sigma_j(n)}})^\top
\end{array} \right]. \]
For $X = [x_{ij}]$, $Y = [y_{ij}] \in M_n$, the {\it Hadamard product} of $X$ and $Y$ is defined by $X \odot Y := [x_{ij} y_{ij}]$.    
For $X, Y \in M_n$, we say that $X$ is {\it Hadamard majorized} by $Y$, denoted by $X\prec_H Y$, if there exists a doubly stochastic matrix $D\in M_n$ such that $X=D\odot Y$. Next, a stronger notion is recalled. $X$ is said to be {\it Hadamard circulant majorized} by $Y$, denoted by $X \prec_{HC} Y$, if there exists a circulant doubly stochastic matrix $C \in M_n$ such that $X = C \odot Y$. Let $T: M_n \rightarrow M_n$ be a linear operator. Then $T$ is said to preserve {\it Hadamard majorization} if 
\begin{center}
$T(X)\prec_H T(Y)$, whenever $X\prec_H Y$.     
\end{center}
Similalry, we say that $T$ preserves {\it Hadamard circulant majorization} if 
\begin{center}
$T(X) \prec_{HC} T(Y)$, whenever $X \prec_{HC} Y$.
\end{center}

The concept of Hadamard circulant majorization was introduced and investigated in \cite{Had_circ_maj}. It is also interesting to note that this notion does not generalize the usual concept of majorization of vectors. For more details, refer \cite{Had_circ_maj,Hadamard_majorization}, where for instance, the authors {\it also} discuss the case of stronger requirements than those that are given here.

It is easy to see that $X\prec_{HC} Y$ implies $X\prec_H Y$. However, it must be clarified that the notions of linear preserver of Hadamard circulant majorization and that of (just) Hadamard majorization are {\it not comparable}. We show this in Example \ref{counter1} and Example \ref{counter2}. We recall two results, which will be useful in our discussion.  The first
one presents a necessary condition for Hadamard majorization, the second one provides a necessary condition for an operator to preserve Hadamard circulant majorization, while the third result gives a necessary and sufficient condition for an operator to preserve Hadamard circulant majorization.

\begin{thm}[{\cite[Theorem 3.11]{Hadamard_majorization}}]\label{Had_maj}
    Let $n\geq 3$ and $T\colon M_n\rightarrow M_n$ a linear operator. If $T$ preserves Hadamard majorization then   $T(E_{ij})\odot T(E_{kl})=0$ for every $1\leq i,j,k,l\leq n$ with $(i,j)\neq (k,l).$
     
\end{thm}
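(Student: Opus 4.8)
The plan is to prove the slightly stronger, equivalent statement that $T(E_{ij})$ and $T(E_{kl})$ have disjoint supports whenever $(i,j)\neq(k,l)$; since two matrices with disjoint nonzero patterns have zero Hadamard product, this is exactly the claim. Write $P:=T(E_{ij})$ and $Q:=T(E_{kl})$, so that linearity of $T$ gives $T(E_{ij}+E_{kl})=P+Q$ and $T(E_{ij}-E_{kl})=P-Q$. My strategy is to feed $T$ two very simple Hadamard majorizations whose sources are $E_{ij}\pm E_{kl}$, push them through the preserver hypothesis, and then read off an arithmetic contradiction at any hypothetical common support entry.

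First I would produce the two source relations $E_{ij}\prec_H (E_{ij}+E_{kl})$ and $E_{ij}\prec_H (E_{ij}-E_{kl})$. Both ask for a doubly stochastic $D$ with $d_{ij}=1$ and $d_{kl}=0$: indeed $E_{ij}=D\odot(E_{ij}\pm E_{kl})$ forces exactly these two entries and leaves all others free. Such a $D$ can be taken to be a permutation matrix realizing a bijection $\pi$ with $\pi(i)=j$ and $\pi(k)\neq l$, and this is precisely where the hypothesis $n\geq 3$ enters: after committing $\pi(i)=j$ one still has enough freedom on the remaining $n-1$ indices to avoid the single forbidden assignment $k\mapsto l$. (For $n=2$ this can fail; for the two diagonal positions, say, the two relevant entries of a $2\times 2$ doubly stochastic matrix are forced to be equal.) Applying preservation of Hadamard majorization to these two relations yields $P\prec_H (P+Q)$ and $P\prec_H (P-Q)$.

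The crux is then a short sign analysis. Suppose, for contradiction, that $(r,s)$ is a common support position, and write $p:=p_{rs}\neq 0$ and $q:=q_{rs}\neq 0$ for the corresponding entries of $P$ and $Q$. From $P\prec_H (P+Q)$ there is a doubly stochastic $D$ with $P=D\odot(P+Q)$, so $d_{rs}=p/(p+q)\in[0,1]$ (note $p+q\neq 0$, since otherwise the entry could not reproduce $p\neq 0$). As both $p/(p+q)=d_{rs}$ and $q/(p+q)=1-d_{rs}$ lie in $[0,1]$, their product $pq/(p+q)^2$ is nonnegative, forcing $p$ and $q$ to have the same sign. From $P\prec_H (P-Q)$ there is a doubly stochastic $D'$ with $d'_{rs}=p/(p-q)\in[0,1]$; but when $p,q$ have the same sign this quantity exceeds $1$, is negative, or is undefined according as $|p|>|q|$, $|p|<|q|$, or $|p|=|q|$, contradicting $d'_{rs}\in[0,1]$ in every case. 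Hence no common support entry exists, and $P\odot Q=0$.

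I expect the main obstacle to be not the final arithmetic but the book-keeping of the second step: one must exhibit genuine doubly stochastic witnesses for the two source majorizations and verify their existence uniformly in $(i,j)$ and $(k,l)$, which is exactly where the combinatorial restriction $n\geq 3$ is indispensable. Once the two pushed-forward relations $P\prec_H (P\pm Q)$ are in hand, the contradiction at a shared support entry is immediate.
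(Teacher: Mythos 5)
Your argument is correct: the existence of a doubly stochastic $D$ with $d_{ij}=1$ and $d_{kl}=0$ for $n\geq 3$ is verified properly (including the degenerate cases $k=i$ or $l=j$), and the sign analysis at a hypothetical common support entry, using $P\prec_H(P+Q)$ and $P\prec_H(P-Q)$, is airtight. Note, however, that the paper does not prove this statement at all --- it is imported verbatim as \cite[Theorem 3.11]{Hadamard_majorization} --- so there is no internal proof to compare against; your perturbation-by-$\pm E_{kl}$ argument is essentially the standard one from that cited source, and it would stand on its own as a self-contained proof here.
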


\begin{lem}[{\cite[Lemma 3]{Had_circ_maj}}]\label{Cj_dominated_C_Pj}
    Let $T\colon M_n\rightarrow  M_n$ be a linear operator that preserves Hadamard circulant majorization. Then there exists a permutation $P$ on $\mathbb{N}_n$ such that, for each $j\in \mathbb{N}_n$, $T(E_{i\sigma_j(i)})$ is dominated by $C_{P(j)}$ for every $i\in \mathbb{N}_n.$
\end{lem}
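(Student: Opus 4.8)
The plan is to reduce the statement to a property about the extreme points of the polytopes that describe the $\prec_{HC}$ relation. The starting observation is a clean description of the ``down-set'' of a matrix: writing a circulant doubly stochastic matrix as $C=\sum_{l=1}^n \lambda_l C_l$ with $(\lambda_l)$ in the standard simplex and using $(\sum_l\lambda_l C_l)\odot Y=\sum_l\lambda_l(C_l\odot Y)$, one gets
\[
\{X\in M_n : X\prec_{HC}Y\}=\operatorname{conv}\{\,C_1\odot Y,\ \ldots,\ C_n\odot Y\,\},
\]
where each $C_l\odot Y$ is simply $Y$ restricted to the $l$-th circulant diagonal $\{(a,\sigma_l(a)):a\in\mathbb{N}_n\}$ and zeroed elsewhere. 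These $n$ ``diagonal parts'' have pairwise disjoint supports, so they are linearly independent whenever nonzero; I will use this disjointness repeatedly to forbid cancellation between distinct diagonals.

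First I would feed $T$ the single-row matrix $Y_i:=\mathbf{e}_i\mathbf{1}^\top$ (row $i$ all ones). Its diagonal parts are exactly the single cells of row $i$, namely $C_l\odot Y_i=E_{i\sigma_l(i)}$, one per circulant diagonal $l$, so $\{X:X\prec_{HC}Y_i\}=\operatorname{conv}\{E_{i\sigma_1(i)},\ldots,E_{i\sigma_n(i)}\}$ is an $(n-1)$-simplex whose vertices are precisely the cells $E_{i\sigma_l(i)}$. Since $T$ preserves $\prec_{HC}$, it maps this simplex into $\{X:X\prec_{HC}T(Y_i)\}=\operatorname{conv}\{C_1\odot T(Y_i),\ldots,C_n\odot T(Y_i)\}$. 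The crux (see below) is that $T$ carries each vertex $E_{i\sigma_l(i)}$ to a \emph{vertex} of the image simplex; granting this, $T(E_{i\sigma_l(i)})=C_{Q_i(l)}\odot T(Y_i)$ for a bijection $Q_i$ of $\mathbb{N}_n$, so each $T(E_{i\sigma_l(i)})$ is supported on the single circulant diagonal $Q_i(l)$, i.e. dominated by $C_{Q_i(l)}$.

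It then remains to show that the target diagonal $Q_i(j)$ is independent of $i$; call the common value $P(j)$. For this I would apply the same characterization to $Y=J=\sum_l C_l$: its diagonal parts are the permutation matrices $C_l$ themselves, $\{X:X\prec_{HC}J\}$ is the simplex of circulant doubly stochastic matrices with vertices $C_1,\ldots,C_n$, and vertex preservation yields $T(C_j)=C_{P(j)}\odot T(J)$, supported on the single diagonal $P(j)$ with $P$ a permutation. Comparing this with $T(C_j)=\sum_i T(E_{i\sigma_j(i)})$, where the $i$-th summand lives on diagonal $Q_i(j)$, and invoking disjointness of distinct diagonals (so there is no cancellation across diagonals) forces $Q_i(j)=P(j)$ for every $i$. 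Equivalently one may use the two-row matrix $(\mathbf{e}_i+\mathbf{e}_k)\mathbf{1}^\top$, whose $l$-th diagonal part is $E_{i\sigma_l(i)}+E_{k\sigma_l(k)}$, to conclude $Q_i(l)=Q_k(l)$ directly. Either way $P\in S_n$ and $T(E_{i\sigma_j(i)})$ is dominated by $C_{P(j)}$ for every $i$, as required.

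The main obstacle is the vertex-preservation step: showing that $T$ sends an extreme point of the source simplex to an extreme point of the image simplex rather than into its interior. Preservation of $\prec_{HC}$ only pushes the source down-set \emph{forward} into the image down-set, so it does not by itself bound the image down-set from above; a collapse such as $T(E_{i\sigma_l(i)})=\tfrac12(C_1\odot T(Y_i))+\tfrac12(C_2\odot T(Y_i))$ would spread the image over two diagonals and break the conclusion. The clean way to exclude this is to establish that $T$ is injective on the relevant spans (the row spaces and the space of circulant matrices), since a linear injection between two simplices having the same number $n$ of vertices must match vertices bijectively. I therefore expect the heart of the argument to be an injectivity/extreme-point argument exploiting the circulant structure — precisely where Hadamard \emph{circulant} majorization does more than general Hadamard majorization, for which Theorem \ref{Had_maj} supplies the analogous disjoint-support conclusion by a different route — together with a short treatment of the degenerate (non-injective) case so as to still produce a bona fide permutation $P$.
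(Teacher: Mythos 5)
First, note that the paper does not prove this lemma: it is quoted verbatim from \cite[Lemma 3]{Had\_circ\_maj} as a known result, so there is no in-paper proof to compare against; I am assessing your argument on its own terms.

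Your geometric setup is correct --- $\{X : X\prec_{HC}Y\}=\operatorname{conv}\{C_1\odot Y,\dots,C_n\odot Y\}$, and the disjoint-support observation is exactly the right tool --- but the argument has a genuine gap at the step you yourself flag as the crux: you never prove that $T$ sends each $E_{i\sigma_l(i)}$ to a single generator $C_m\odot T(Y_i)$ rather than into the interior of the image polytope. The repair you propose (injectivity of $T$ on the relevant spans, so that a linear injection between two $n$-vertex simplices matches vertices) is unavailable here: the entire point of this paper, and of the lemma's use in it, is that $T$ is \emph{not} assumed injective, and the image set $\operatorname{conv}\{C_1\odot T(Y_i),\dots,C_n\odot T(Y_i)\}$ need not be a simplex at all (generators may coincide, vanish, or be affinely dependent). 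The missing idea is not vertex preservation of the polytope of $Y_i$ but a scaling argument applied to the down-set of the cell itself: since $E_{i\sigma_j(i)}=C_j\odot E_{i\sigma_j(i)}$, one has $\lambda E_{i\sigma_j(i)}=\bigl(\lambda C_j+(1-\lambda)C_{j'}\bigr)\odot E_{i\sigma_j(i)}\prec_{HC}E_{i\sigma_j(i)}$ for every $\lambda\in[0,1]$ and $j'\neq j$. Hence $\lambda A=C'\odot A$ with $A:=T(E_{i\sigma_j(i)})$ and $C'=\sum_m\mu_m C_m$; decomposing $A=\sum_m A_m$ along the (disjointly supported) circulant diagonals forces $\mu_m=\lambda$ for every $m$ with $A_m\neq 0$, and $\sum_m\mu_m=1$ then caps the number of nonzero $A_m$ at $\lfloor 1/\lambda\rfloor$. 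Taking $\lambda>1/2$ pins $A$ to a single circulant diagonal with no injectivity assumption. Your subsequent steps also lean on the unproved claim and have secondary leaks: in comparing $T(C_j)=\sum_i T(E_{i\sigma_j(i)})$ with a single-diagonal $T(C_j)$, ``no cancellation across diagonals'' does not preclude cancellation \emph{within} a diagonal among different $i$ (your two-row variant is the right fix, but it again presupposes the unproved single-diagonal claim for the two-row matrix); and the injectivity of $P$ (so that it is genuinely a permutation) is asserted, not derived --- it requires a separate argument, e.g.\ applying the preserver property to $E_{i\sigma_j(i)}+yE_{k\sigma_{j'}(k)}$ with a suitably signed $y$ to rule out two distinct diagonal classes landing, nontrivially, on the same circulant diagonal.
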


\begin{thm}[{\cite[Theorem 4]{Had_circ_maj}}] \label{Had_circ_maj_charcaterization)}
Let $T: M_n \rightarrow M_n$ be  linear. Then the following statements are equivalent:
\begin{enumerate}
\item $T$ preserves Hadamard circulant majorization.
\item Let $C_k \in \mathscr{C}_n, B \in M_n$ and $B_k = B \odot C_k$. Then, for some circulant doubly stochastic matrix $C_{B_k}$ in $M_n$, we have $$T(B_k) = C_{B_k} \odot T(B).$$ 
\end{enumerate}
\end{thm}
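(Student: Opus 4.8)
The plan is to establish both implications directly from the definitions, the two structural facts doing all the work being that each circulant permutation matrix $C_k$ is itself a circulant doubly stochastic matrix (the trivial convex combination with coefficient $1$ on $C_k$), and that the family of circulant doubly stochastic matrices, being by definition the convex hull of $\mathscr{C}_n = \{C_1,\dots,C_n\}$, is closed under convex combinations. Both directions then reduce to bookkeeping with the bilinearity of $\odot$ and the linearity of $T$.

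For $(1)\Rightarrow(2)$ I would fix $C_k\in\mathscr{C}_n$ and $B\in M_n$ and set $B_k=B\odot C_k$. Since $C_k$ is circulant doubly stochastic, the very equation $B_k=C_k\odot B$ exhibits $B_k\prec_{HC}B$. Applying the hypothesis that $T$ preserves Hadamard circulant majorization gives $T(B_k)\prec_{HC}T(B)$, which by definition is the assertion that $T(B_k)=C_{B_k}\odot T(B)$ for some circulant doubly stochastic $C_{B_k}$. This direction is essentially immediate.

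For the converse $(2)\Rightarrow(1)$ I would start from an arbitrary pair $X\prec_{HC}Y$, so that $X=C\odot Y$ for some circulant doubly stochastic $C$, and write $C=\sum_{k=1}^{n}\lambda_k C_k$ with $\lambda_k\geq 0$ and $\sum_k\lambda_k=1$. Bilinearity of the Hadamard product gives $X=\sum_k\lambda_k\,(C_k\odot Y)$. Setting $Y_k=Y\odot C_k$ and invoking $(2)$ with $B=Y$, for each $k$ there is a circulant doubly stochastic $C_{Y_k}$ with $T(Y_k)=C_{Y_k}\odot T(Y)$. Linearity of $T$ followed once more by bilinearity of $\odot$ then yields
\[
T(X)=\sum_k\lambda_k\,T(Y_k)=\sum_k\lambda_k\,\big(C_{Y_k}\odot T(Y)\big)=\Big(\sum_k\lambda_k C_{Y_k}\Big)\odot T(Y).
\]

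The one step carrying real content is verifying that $C':=\sum_k\lambda_k C_{Y_k}$ is again circulant doubly stochastic, for then $T(X)=C'\odot T(Y)$ gives $T(X)\prec_{HC}T(Y)$ and we are done. This is exactly where the qualifier \emph{circulant} matters: because the circulant doubly stochastic matrices are the convex hull of $\mathscr{C}_n$, the convex combination $C'$ (formed with the same weights $\lambda_k$ that decomposed $C$) stays inside that convex hull. The hypothesis $(2)$ is crucial precisely because it delivers \emph{circulant} witnesses $C_{Y_k}$ rather than merely doubly stochastic ones; had the witnesses only been doubly stochastic, $C'$ would remain doubly stochastic but we could no longer conclude circulant majorization. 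I expect no genuine obstacle beyond this convexity check, which closes the argument.
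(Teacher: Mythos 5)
Your proof is correct and complete. Note that the paper does not actually prove this theorem: it is quoted from \cite{Had_circ_maj} and used as a black box, so there is no internal proof to compare against. Your argument is the natural one, and the two points you isolate are exactly the right ones: the forward direction needs only that each $C_k$ is itself circulant doubly stochastic (so $B_k=C_k\odot B$ already witnesses $B_k\prec_{HC}B$), and the converse needs only that a convex combination $\sum_k\lambda_k C_{Y_k}$ of circulant doubly stochastic matrices remains in the convex hull of $\mathscr{C}_n$, together with bilinearity of $\odot$ and linearity of $T$.
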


\begin{ex}\label{counter1}
    Consider $T\colon M_3\rightarrow M_3$ defined as \[T(A):=\begin{bmatrix}
        a_{11}&0&0\\
        0&0&a_{11}\\
        0&a_{11}&0
    \end{bmatrix},\quad A=[a_{ij}]\in M_3.\] For a doubly stochastic matrix $D=[d_{ij}]\in M_3$, define a doubly stochastic matrix $\hat{D}\in M_3$ as $\hat{D}=\begin{bsmallmatrix}
        d_{11}&d_{12}&d_{13}\\
        d_{12}&d_{13}&d_{11}\\
        d_{13}&d_{11}&d_{12}
    \end{bsmallmatrix}$. Then, for every $Y\in M_3$, \[T(D\odot Y)=\hat{D}\odot T(Y).\] Hence, if $X\prec_H Y$, then for some doubly stochastic $D$, we would have $X=D \odot Y.$ It then follows that $T(X)=\hat{D}\odot T(Y),$ for the doubly stochastic matrix $\hat{D}$, constructed as above. Thus, $T$ preserves Hadamard majorization. Next, note that $T(C_1\odot I)=0$, but 
    \[ C_1\odot T(I)=E_{23},~ C_2\odot T(I)=E_{32} \quad \text{and} \quad C_3\odot T(I)=E_{11}.\] Hence, by Theorem \ref{Had_circ_maj_charcaterization)}, we conclude that $T$ does not preserve Hadamard circulant majorization.
\end{ex}

\begin{ex}\label{counter2}
    Let $T\colon M_3 \rightarrow M_3$ be defined by
\[ T(A) = \left[ \begin{array}{ccc}
a_{12} + a_{23} + a_{31} & a_{11} & 0 \\
0 & a_{12} - a_{23} + a_{31} & a_{22} \\
a_{33} & 0 & a_{31} \\
\end{array} \right],\quad A\in M_3. \]
Observe that
\[T(C_1\odot A)=C_3\odot T(A),~T(C_2\odot A)=C_2\odot T(A),~T(C_3\odot A)=C_1\odot T(A).\]
Hence $T$ preserves Hadamard cirulant majorization, by Theorem \ref{Had_circ_maj_charcaterization)}. Further, the entry along the intersection of the  first row and the first column of the matrix $T(E_{12})\odot T(E_{23})$ is $1$ and so it is not the zero matrix. Hence, by Theorem \ref{Had_maj}, it follows that $T$ does not preserve Hadamard majorization.
\end{ex}

In order to motivate the contents of this note, in what follows, we recall the three notions of generalized inverses. For $X, Y \in M_n$, define $\left\langle X, Y \right\rangle := \tr(XY^\top)$, where $Y^\top$ means the transpose of $Y$. Then, $M_n$, along with this inner product, can be regarded as a Hilbert space. Let $T\colon M_n\rightarrow M_n$ be a linear operator. Denote the adjoint operator of $T$ by $T^*$ and, for a subset $S\subseteq M_n$, let $S^{\perp}$ denote the orthogonal complement of $S$. It is well known that there is a unique operator $X\colon M_n\rightarrow M_n$ satisfying the following conditions: $$TXT=T,\; XTX=X,\; (TX)^*=TX,\; (XT)^*=XT.$$ This unique operator $X$ is called the {\it Moore-Penrose inverse} of $T$ and it is denoted by $T^\dag$.
There are several equivalent definitions of the Moore-Penrose inverse, see \cite{Ben_israel_book,Generalized_inverses_singapore}. For example, we shall make use of the following characterization: 

\begin{pro}\label{MP_equivalence_defn}
Let $T\colon M_n \rightarrow M_n$ be linear. Then $T^{\dagger}\colon M_n\rightarrow M_n$ is the unique linear operator satisfying:
\begin{enumerate}
\item $T^\dagger T(X) = X$ for every $X \in N(T)^{\perp}$, 
\item $T^\dagger (Y) = 0$ for every $Y \in R(T)^{\perp}$.
\end{enumerate}
\end{pro}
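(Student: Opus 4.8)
The plan is to exploit the two orthogonal decompositions that $T$ induces on the (finite-dimensional) Hilbert space $M_n$, namely $M_n = N(T)^{\perp} \oplus N(T)$ and $M_n = R(T) \oplus R(T)^{\perp}$, and to argue that conditions (1) and (2) together pin down a linear operator uniquely, which can then be identified with $T^{\dagger}$ by verifying the four Penrose equations. First I would record the standard fact that the restriction $T|_{N(T)^{\perp}}$ is a bijection from $N(T)^{\perp}$ onto $R(T)$: it is injective because its kernel is $N(T) \cap N(T)^{\perp} = \{0\}$, and it is surjective because $T(N(T)^{\perp}) = T(M_n) = R(T)$.

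For uniqueness, I would suppose $X$ is any linear operator satisfying (1) and (2). Given an arbitrary $Y \in M_n$, write $Y = Y_1 + Y_2$ with $Y_1 \in R(T)$ and $Y_2 \in R(T)^{\perp}$ according to the second decomposition. Since $Y_1 \in R(T)$, there is a unique $W \in N(T)^{\perp}$ with $T(W) = Y_1$; condition (1) then forces $X(Y_1) = X(T(W)) = W$, while condition (2) forces $X(Y_2) = 0$. By linearity $X(Y) = W$, a quantity that depends only on $Y$ and not on the particular choice of $X$. Hence at most one linear operator can satisfy both conditions.

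For existence (and to identify this operator with $T^{\dagger}$), I would define $X$ by $X(Y) := (T|_{N(T)^{\perp}})^{-1}(Y_1)$ through the decomposition above, so that $X$ satisfies (1) and (2) by construction. It then remains to check that $X$ satisfies the four defining equations of the Moore-Penrose inverse. A direct computation shows $TX = P_{R(T)}$ and $XT = P_{N(T)^{\perp}}$, the orthogonal projections onto $R(T)$ and $N(T)^{\perp}$; since orthogonal projections are self-adjoint, the two symmetry equations $(TX)^* = TX$ and $(XT)^* = XT$ hold, and the idempotency of these projections yields $TXT = T$ and $XTX = X$. By the uniqueness of the Moore-Penrose inverse, $X = T^{\dagger}$, which completes the argument.

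I expect the main obstacle to be the bookkeeping that links the self-adjointness equations in the Penrose definition to the orthogonality statements in conditions (1) and (2): the essential point is that $(TX)^* = TX$ and $(XT)^* = XT$ are precisely what make $TX$ and $XT$ the \emph{orthogonal} (rather than merely idempotent) projections onto $R(T)$ and $N(T)^{\perp}$, so that the orthogonal complements appearing in (1) and (2) are the correct subspaces. Keeping careful track of which decomposition, $N(T)^{\perp} \oplus N(T)$ versus $R(T) \oplus R(T)^{\perp}$, is in force at each step is where care will be needed.
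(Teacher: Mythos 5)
Your argument is correct and complete: the bijectivity of $T|_{N(T)^{\perp}}\colon N(T)^{\perp}\to R(T)$, the uniqueness via the decomposition $M_n=R(T)\oplus R(T)^{\perp}$, and the identification of the resulting operator with $T^{\dagger}$ by checking that $TX$ and $XT$ are the orthogonal projections onto $R(T)$ and $N(T)^{\perp}$ are all sound. Note that the paper itself gives no proof of this proposition --- it is stated as a known equivalent characterization of the Moore--Penrose inverse with a citation to the standard references --- and your write-up is exactly the standard argument one finds there, so there is nothing to contrast.
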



\begin{defn}
    For a linear operator $T\colon M_n\rightarrow M_n$, the smallest nonnegative integer $k$ for which $\emph{rank } T^k=\emph{rank }T^{k+1}$ is called the index of $T$. It is denoted by $\emph{Ind } T$.
\end{defn}
A linear operator on a finite dimensional vector space has a finite index. If $\text{Ind } T= m$, then there exists a unique linear operator $U\colon M_n\rightarrow M_n$ such that \begin{align*}
    T^mUT=T^m,~UTU=U,~ UT=TU.
\end{align*} 
This unique operator $U$ is called the {\it Drazin inverse} of $T$ and it is denoted by $T^D$. Moreover, we have, $M_n = N(T^m) \oplus R(T^m)$ and $T|_{R(T^m)}$ is an injective mapping from $R(T^m)$ onto $R(T^m)$. When $m=1$, the Drazin inverse has a special name; the {\it group inverse}. The group inverse of $T$ is denoted by $T\hash$.  Let us emphasize that unlike the Moore-Penrose inverse or the Drazin inverse, the group inverse need not exist for a given operator. For instance, no nonzero nilpotent operator has the group inverse. For more details, refer to \cite{Ben_israel_book,Generalized_inverses_singapore,gen_inv_line_transformation},

The following equivalent definition of the Drazin inverse will be useful in our context.

\begin{pro} [{\cite[Corollory 12.1.2]{Generalized_inverses_singapore}}]\label{equivalence_Drazin_inverse}
Let $T\colon M_n \rightarrow M_n$ be a linear operator with $\Ind T = m$. Then the Drazin inverse of $T$ is the unique linear operator $T^D$ satisfying:
\begin{enumerate}
\item $T^D (X) = 0$ for every $X \in N(T^m)$,
\item $T^DT (Y) = TT^D (Y) = Y$ for every $Y \in R(T^m)$.
\end{enumerate}
\end{pro}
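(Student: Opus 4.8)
The plan is to exploit the decomposition $M_n = N(T^m)\oplus R(T^m)$, together with the fact that $T$ restricts to a bijection of $R(T^m)$ onto itself, both of which are recorded above. First I would settle uniqueness. If $S\colon M_n\to M_n$ is any linear operator satisfying conditions (1) and (2), then writing an arbitrary $W\in M_n$ as $W = W_1 + W_2$ with $W_1\in N(T^m)$ and $W_2\in R(T^m)$, condition (1) forces $S(W_1)=0$, while writing $W_2 = T(V)$ for the unique $V\in R(T^m)$ and applying condition (2) gives $S(W_2) = S T(V) = V = \bigl(T|_{R(T^m)}\bigr)^{-1}(W_2)$. Thus $S$ is completely determined on each summand, so at most one operator can satisfy (1) and (2).

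For existence I would verify that the Drazin inverse $U := T^D$, defined by $T^mUT = T^m$, $UTU = U$, and $UT = TU$, meets both conditions. The key is to extract two algebraic identities from these relations. Using $UT = TU$ in $U = UTU$ yields $U = U^2 T$, and an induction then gives $U = U^{k+1}T^k$ for all $k\ge 1$; in particular $U = U^{m+1}T^m$. Hence for $X\in N(T^m)$ we get $U(X) = U^{m+1}\bigl(T^m(X)\bigr) = 0$, which is condition (1). For condition (2), commutativity $UT = TU$ already gives $T^DT = TT^D$; and from $T^mUT = T^m$ together with the commutativity of $U$ with every power of $T$ one obtains $UT^{m+1} = T^{m+1}U = T^m$. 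Applying this to $Y = T^m(Z)\in R(T^m)$ gives $UT(Y) = UT^{m+1}(Z) = T^m(Z) = Y$, and likewise $TU(Y)=Y$.

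Combining the two halves, $T^D$ satisfies (1) and (2) and is the only operator that does, which is the claim. I expect the only real work to be the algebraic bookkeeping in the existence step, specifically the inductive derivation of $U = U^{m+1}T^m$ and the passage $T^mUT = T^m \Rightarrow UT^{m+1} = T^m$, where one must track exponents carefully and invoke $UT = TU$ at the right places; once these two identities are in hand, both conditions fall out immediately, and the uniqueness argument is a routine consequence of the direct-sum structure.
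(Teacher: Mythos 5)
The paper does not prove this proposition at all---it is quoted verbatim from the reference [Corollary 12.1.2, Wang--Wei--Qiao] and used as a known characterization---so there is no in-paper argument to compare against. Your proof is correct and self-contained: the uniqueness half is the routine consequence of $M_n = N(T^m)\oplus R(T^m)$ and the invertibility of $T|_{R(T^m)}$ that you describe, and the existence half rests on the two identities $U = U^{m+1}T^m$ (from $U=UTU$ and $UT=TU$ by the induction $U = U^{k+1}T^k \Rightarrow U = U^k(U^2T)T^k = U^{k+2}T^{k+1}$) and $UT^{m+1} = T^{m+1}U = T^m$ (from $T^mUT=T^m$ and commutativity), both of which you identify and which do deliver conditions (1) and (2) exactly as you claim.
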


Let us turn our attention to the main result of \cite{Had_circ_maj}, which is the motivation for the study undertaken here.\\

\begin{thm}[{\cite[Theorem 6]{Had_circ_maj}}]\label{motiv}
Let $T: M_n \rightarrow M_n$ be a linear bijection. If $T$ preserves Hadamard circulant majorization, then so does $T^{-1}$.    
\end{thm}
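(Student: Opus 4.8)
The plan is to exploit the \emph{circulant diagonal} decomposition of $M_n$ induced by $C_1,\dots,C_n$ and to show that a bijection preserving Hadamard circulant majorization does nothing more than permute these diagonals; it then follows that $T^{-1}$ permutes them by the inverse permutation, and the characterization in Theorem \ref{Had_circ_maj_charcaterization)} applies verbatim to $T^{-1}$.

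First I would fix notation. Let $\mathcal{D}_j$ denote the support of $C_j$, i.e. the set of positions $\{(i,\sigma_j(i)) : i \in \mathbb{N}_n\}$, and let $V_j := \operatorname{span}\{E_{i\sigma_j(i)} : i \in \mathbb{N}_n\}$ be the $n$-dimensional subspace of matrices supported on $\mathcal{D}_j$. Because $\sigma_j = \sigma_1^j$ ranges over all cyclic shifts, each entry position lies in exactly one $\mathcal{D}_j$, so the diagonals partition the $n^2$ positions and $M_n = \bigoplus_{j=1}^n V_j$. A key elementary observation is that the Hadamard product $C_k \odot A = A \odot C_k$ is precisely the projection of $A$ onto $V_k$, extracting its $k$-th diagonal part $A_k := A \odot C_k$.

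Next I would invoke Lemma \ref{Cj_dominated_C_Pj}: it supplies a permutation $P$ of $\mathbb{N}_n$ such that $T(E_{i\sigma_j(i)})$ is dominated by $C_{P(j)}$, hence supported on $\mathcal{D}_{P(j)}$, for every $i$. By linearity this gives $T(V_j) \subseteq V_{P(j)}$ for every $j$. Since $T$ is a bijection, $P$ is a permutation, and $\dim V_j = \dim V_{P(j)} = n$, a dimension count forces $T(V_j) = V_{P(j)}$ and consequently $T^{-1}(V_\ell) = V_{P^{-1}(\ell)}$. For the computational core, fix $C_k \in \mathscr{C}_n$ and $B \in M_n$, and set $A := T^{-1}(B)$, decomposed as $A = \sum_{j=1}^n A_j$ with $A_j = A \odot C_j \in V_j$. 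Then $B \odot C_k = T(A) \odot C_k = C_k \odot \sum_{j} T(A_j)$; each $T(A_j) \in V_{P(j)}$, so projecting onto $V_k$ retains only the index $j$ with $P(j)=k$, giving $B \odot C_k = T(A_{P^{-1}(k)})$. Applying $T^{-1}$ yields $T^{-1}(B \odot C_k) = A_{P^{-1}(k)} = C_{P^{-1}(k)} \odot T^{-1}(B)$. As $C_{P^{-1}(k)}$ is a circulant permutation matrix, hence a (trivial) circulant doubly stochastic matrix, condition (2) of Theorem \ref{Had_circ_maj_charcaterization)} holds for $T^{-1}$, so $T^{-1}$ preserves Hadamard circulant majorization.

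The step I expect to be the main obstacle is upgrading the inclusion $T(V_j) \subseteq V_{P(j)}$ coming from Lemma \ref{Cj_dominated_C_Pj} to the exact equality $T(V_j) = V_{P(j)}$, which is what allows me to describe $T^{-1}$ blockwise along the diagonals; this rests on the bijectivity of $T$ together with $P$ being a genuine permutation, so that the inclusions cannot collapse several diagonals into one. Everything else reduces to bookkeeping with the layer projections $A \mapsto A \odot C_k$ and the direct-sum decomposition $M_n = \bigoplus_j V_j$.
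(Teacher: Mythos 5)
Your proof is correct; the paper itself imports Theorem \ref{motiv} from the cited reference without reproving it, but your argument is essentially the route the paper's own machinery takes for its main results. Lemma \ref{Cj_dominated_C_Pj} yields the identity $T(C_j \odot B) = C_{P(j)} \odot T(B)$ (this is the paper's Lemma \ref{unknown}, of which your direct-sum decomposition $M_n = \bigoplus_j V_j$ with Hadamard multiplication by $C_k$ as the projection onto $V_k$ is a clean repackaging), and substituting $B \mapsto T^{-1}(B)$ inverts it --- note that this step needs only the inclusion $T(V_j) \subseteq V_{P(j)}$, so the dimension count you flag as the main obstacle is not actually load-bearing.
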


Next, we present a summary of our results. In view of Theorem \ref{motiv}, we ask if there are analogues which may be presented in terms of the three generalized inverses stated above, when the operator $T$ is not bijective. Our results show that the answer is in the affirmative. We obtain verbatim versions of Theorem \ref{motiv} for three of the most prominent generalized inverses; the Drazin inverse (Theorem \ref{Drazin}), the group inverse (Corollary \ref{gr}), and the Moore-Penrose inverse (Theorem \ref{MP}). We show that the adjoint operator inherits the preserver property, too (Theorem \ref{adj}). It is pertinent to point to the rather surprising fact, that very little research has been undertaken towards determining if generalized inverses preserve a given majorization property. In this context, we refer to \cite{kcsmarsusa} for perhaps the first and recent results on this topic. As is mentioned earlier, the notion of Hadamard circulant majorization has no resemblance to the usual notion of majorization and so the results obtained here may not be considered as generalizations of those reported in \cite{kcsmarsusa}.

\section{Main results}
We begin with a set of auxiliary results.\\

\begin{lem}\label{Lemma 1}
Let $A \in M_n$ and $i \in \mathbb{N}_n$. If $A \odot C_i = A$, then $A \odot C_j = 0$ for all $j \in \mathbb{N}_n \setminus \{i\}$.
\end{lem}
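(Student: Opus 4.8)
The plan is to reduce everything to a single structural observation, namely that the circulant permutation matrices $C_1, \ldots, C_n$ have pairwise disjoint supports, and then to exploit the commutativity and associativity of the Hadamard product. The hypothesis $A \odot C_i = A$ says precisely that $A$ is supported only at the positions where $C_i$ carries a $1$; Hadamard-multiplying such an $A$ by any $C_j$ whose support is disjoint from that of $C_i$ must therefore annihilate it.

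First I would record the elementary identity $C_i \odot C_j = 0$ for distinct $i, j \in \mathbb{N}_n$. Since $C_j$ is the permutation matrix whose only nonzero entries sit at the positions $(k, \sigma_j(k))$ for $k \in \mathbb{N}_n$ (as in the row description of $C_j$ given earlier), the entry $(C_i \odot C_j)_{kl}$ can be nonzero only when $l = \sigma_i(k)$ and simultaneously $l = \sigma_j(k)$, i.e.\ only when $\sigma_i(k) = \sigma_j(k)$. But $\sigma_j = \sigma_1^{j}$ is a power of the single $n$-cycle $\sigma_1$, so $\sigma_i(k) = \sigma_j(k)$ forces $\sigma_1^{i-j}$ to fix $k$; as $\sigma_1$ is an $n$-cycle, $\sigma_1^{i-j}$ fixes some point only when $i \equiv j \pmod n$, which for $i, j \in \mathbb{N}_n$ means $i = j$. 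Hence for $i \neq j$ the supports of $C_i$ and $C_j$ are disjoint and $C_i \odot C_j = 0$.

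With this in hand the lemma is immediate. Using the commutativity and associativity of $\odot$ together with the hypothesis $A = A \odot C_i$, for any $j \in \mathbb{N}_n \setminus \{i\}$ I would write
\[
A \odot C_j = (A \odot C_i) \odot C_j = A \odot (C_i \odot C_j) = A \odot 0 = 0.
\]

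As for difficulties, there is essentially no serious obstacle here; the only point requiring a moment's care is the disjointness claim, and even that reduces to the standard fact that distinct powers (modulo $n$) of an $n$-cycle agree at no point. Everything else is a one-line algebraic manipulation.
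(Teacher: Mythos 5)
Your proof is correct and rests on the same key fact as the paper's, namely that distinct powers $\sigma_i, \sigma_j$ of the $n$-cycle $\sigma_1$ agree at no point, so the supports of $C_i$ and $C_j$ are disjoint; the paper carries this out entrywise on $A$, whereas you package it as the identity $C_i \odot C_j = 0$ and finish by associativity of $\odot$, which is a slightly cleaner presentation of the same argument. A minor point in your favour: you actually justify the claim $\sigma_i(k) \neq \sigma_j(k)$, which the paper's proof asserts without comment.
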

\begin{proof}
Set $A:=[a_{hk}].$
Then $A = A \odot C_i =  \sum_{h = 1}^n a_{h \sigma_i(h)} E_{h \sigma_i(h)}.$ Thus, $a_{hk} = 0$ for all $k \neq \sigma_i(h)$. Let $j \in \mathbb{N}_n\setminus\{i\}$. Then
\[
(A \odot C_j)_{hk} = a_{hk}(C_j)_{hk} =
\begin{cases} 
    a_{h \sigma_j(h)}, & \text{if } k = \sigma_j(h) \\
    0, & \text{otherwise}
\end{cases}.
\]
Also, we have $\sigma_j(h) \neq \sigma_i(h)$. Hence, we obtain that $a_{h \sigma_j(h)} = 0$. Therefore $A \odot C_j = 0$, as required.
\end{proof}
\begin{lem}\label{unknown}
    Let $T\colon M_n\rightarrow M_n$ be a linear preserver of Hadamard circulant majorization. Then there exists a permutation $P$ on $\mathbb{N}_n$ such that, for every $C_j\in \mathscr{C}_n$ and $B\in M_n$,
    \begin{align*}
        T(C_j\odot B)= C_{P(j)}\odot T(B).
    \end{align*}
\end{lem}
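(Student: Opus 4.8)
The plan is to deduce the stated multiplicative identity purely from the domination statement of Lemma~\ref{Cj_dominated_C_Pj}, the linearity of $T$, and the combinatorics of the cyclic diagonals; in particular I do not expect to need the characterization in Theorem~\ref{Had_circ_maj_charcaterization)}. I would take $P$ to be exactly the permutation furnished by Lemma~\ref{Cj_dominated_C_Pj}, so that for all $i,j\in\mathbb{N}_n$ the matrix $T(E_{i\sigma_j(i)})$ is dominated by $C_{P(j)}$, which I read as the support-containment identity $C_{P(j)}\odot T(E_{i\sigma_j(i)})=T(E_{i\sigma_j(i)})$. Two elementary structural facts underpin the argument. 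First, since each $\sigma_m=\sigma_1^m$ is a power of a full $n$-cycle, $\sigma_m(i)\neq\sigma_{m'}(i)$ for every $i$ whenever $m\neq m'$; hence the diagonals $\{(i,\sigma_m(i)):i\in\mathbb{N}_n\}$ are pairwise disjoint, and so are the supports of the matrices $C_m$. Second, every index pair $(k,l)$ lies on exactly one such diagonal, i.e.\ there is a unique $j'$ with $l=\sigma_{j'}(k)$.

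The key step is to show that the off-diagonal basis images are annihilated by Hadamard multiplication with $C_{P(j)}$. Fix $j$ and take a pair $(k,l)=(k,\sigma_{j'}(k))$ with $j'\neq j$. By Lemma~\ref{Cj_dominated_C_Pj} the matrix $T(E_{kl})$ is supported inside $\operatorname{supp}(C_{P(j')})$; since $P$ is a permutation, $P(j')\neq P(j)$, and by the disjointness noted above $\operatorname{supp}(C_{P(j)})\cap\operatorname{supp}(C_{P(j')})=\varnothing$. Therefore $C_{P(j)}\odot T(E_{kl})=0$ for every pair $(k,l)$ off the $\sigma_j$-diagonal.

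With these facts in hand I would expand both sides in the basis $\{E_{kl}\}$. Writing $B=\sum_{k,l}b_{kl}E_{kl}$ and using linearity, the vanishing just established, and then the support-containment identity,
\begin{align*}
C_{P(j)}\odot T(B)
&=\sum_{k,l}b_{kl}\bigl(C_{P(j)}\odot T(E_{kl})\bigr)
=\sum_{i}b_{i\sigma_j(i)}\bigl(C_{P(j)}\odot T(E_{i\sigma_j(i)})\bigr)\\
&=\sum_{i}b_{i\sigma_j(i)}\,T(E_{i\sigma_j(i)})
=T\Bigl(\sum_{i}b_{i\sigma_j(i)}E_{i\sigma_j(i)}\Bigr)
=T(C_j\odot B),
\end{align*}
the last equality being $C_j\odot B=\sum_i b_{i\sigma_j(i)}E_{i\sigma_j(i)}$. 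This is the desired identity.

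The one point needing care --- and the step I expect to be the main obstacle --- is interpreting ``dominated'' in Lemma~\ref{Cj_dominated_C_Pj} correctly: the argument needs the clean equality $C_{P(j)}\odot T(E_{i\sigma_j(i)})=T(E_{i\sigma_j(i)})$ (support containment) rather than a weaker majorization relation. One can sanity-check this reading against Example~\ref{counter2}, where $P(1)=3$ and $T(E_{12})=E_{11}+E_{22}$ is indeed supported on the diagonal of $C_3=I$ while not being a scalar multiple of it. Once this reading is fixed, the remaining bookkeeping with disjoint supports is routine; I note that the alternative route through Theorem~\ref{Had_circ_maj_charcaterization)} only yields $T(C_j\odot B)=C_{B_j}\odot T(B)$ for \emph{some} circulant doubly stochastic $C_{B_j}$, and would force an awkward argument that the coefficient of $C_{P(j)}$ equals $1$, which the basis-expansion above avoids entirely.
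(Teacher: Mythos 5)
Your proposal is correct and follows essentially the same route as the paper: both take $P$ from Lemma~\ref{Cj_dominated_C_Pj}, expand $B$ in the basis $\{E_{hk}\}$, kill the off-diagonal terms using the disjointness of the supports of distinct $C_m$ (which is exactly the content of the paper's Lemma~\ref{Lemma 1}, which you inline rather than cite), and absorb the diagonal terms via the support-containment reading of ``dominated.'' Your reading of domination and your remark about avoiding Theorem~\ref{Had_circ_maj_charcaterization)} both match the paper's treatment.
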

\begin{proof}
    For $B = [b_{hk}]$, we have $B = \displaystyle \sum_{h, k} b_{hk} E_{hk}$. Let $P$ be the permutation on $\mathbb{N}_n$ as described in Lemma \ref{Cj_dominated_C_Pj}. Then
\begin{align*}
C_{P(j)} \odot T(B) &= C_{P(j)} \odot \displaystyle \left( \sum_{h, k} b_{hk} T(E_{hk}) \right) \\
& = \displaystyle \sum_{h, k} b_{hk} \left( C_{P(j)} \odot T(E_{hk}) \right) \\
& = \displaystyle \sum_{h = 1}^n \left( b_{h \sigma_j(h)} \left( C_{P(j)} \odot T(E_{h \sigma_j(h)}) \right) + \displaystyle \sum_{k \neq \sigma_j(h)}  b_{hk} \left( C_{P(j)} \odot T(E_{hk}) \right) \right).
\end{align*}
Note that, $k \neq \sigma_j(h)$ implies $k = \sigma_r(h)$ for some $r \in \mathbb{N}_n \setminus \{j\}$. Therefore, $T(E_{hk})$ is dominated by $C_{P(r)}$, and thus, from Lemma \ref{Lemma 1}, we get $C_{P(j)} \odot T(E_{hk}) = 0.$ Since $$C_{P(j)} \odot T(E_{h \sigma_j(h)}) = T(E_{h \sigma_j(h)}),$$ we get
\begin{align*}
C_{P(j)} \odot T(B) & = \displaystyle \sum_{h = 1}^n \left( b_{h \sigma_j(h)} \left( C_{P(j)} \odot T(E_{h \sigma_j(h)}) \right) + \displaystyle \sum_{k \neq \sigma_j(h)}  b_{hk} \left( C_{P(j)} \odot T(E_{hk}) \right) \right) \\
& = \displaystyle \sum_{h = 1}^n \left( b_{h \sigma_j(h)} T(E_{h \sigma_j(h)}) \right) \\
& = T \left( \displaystyle \sum_{h = 1}^n b_{h \sigma_j(h)} E_{h \sigma_j(h)} \right) \\
& = T(C_j \odot B).
\end{align*}
\end{proof}

\begin{lem}\label{invariance_of N(T)_and_R(T)}
Let $T\colon M_n \rightarrow M_n$ be a linear preserver of Hadamard circulant majorization. Let $K$ denote one of the four subspaces: $N(T), R(T), N(T)^{\perp}, R(T)^{\perp}.$ Then, for any circulant doubly stochastic matrix $C$, we have the following implication:\begin{equation*}
    X \in K \Longrightarrow C\odot X\in K.
\end{equation*}

\end{lem}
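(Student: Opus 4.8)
The engine of the proof is Lemma~\ref{unknown}, which supplies a permutation $P$ on $\mathbb{N}_n$ such that
\[
T(C_j \odot X) = C_{P(j)} \odot T(X) \qquad \text{for all } j \in \mathbb{N}_n \text{ and } X \in M_n.
\]
Because every circulant doubly stochastic matrix is a convex combination $C = \sum_{j} \lambda_j C_j$, one has $C \odot X = \sum_{j} \lambda_j (C_j \odot X)$, and since each of the four subspaces $N(T), R(T), N(T)^\perp, R(T)^\perp$ is closed under linear combinations, it suffices to prove the implication $X \in K \Rightarrow C_j \odot X \in K$ for each generator $C_j$. I would therefore fix $j$ throughout and treat the four subspaces in turn.

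For $K = N(T)$ the argument is immediate: if $T(X) = 0$, then by the displayed relation $T(C_j \odot X) = C_{P(j)} \odot T(X) = 0$, so $C_j \odot X \in N(T)$. For $K = R(T)$ the situation is subtler, and this is the one step I expect to be the real obstacle. Writing $Y = T(X) \in R(T)$, the relation applied to the index $j$ only gives $C_{P(j)} \odot Y = T(C_j \odot X) \in R(T)$, i.e.\ membership for the index $P(j)$ rather than $j$. The fix is to use that $P$ is a bijection: applying the relation with $P^{-1}(j)$ in place of $j$ yields
\[
C_j \odot Y = C_{P(P^{-1}(j))} \odot T(X) = T\!\left(C_{P^{-1}(j)} \odot X\right) \in R(T),
\]
which is exactly what is needed.

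For the two orthogonal complements I would pass through self-adjointness. Hadamard multiplication by any fixed real matrix $A = [a_{hk}]$ is self-adjoint with respect to $\langle X, Y \rangle = \tr(XY^\top)$, since $\langle A \odot X, Y \rangle = \sum_{h,k} a_{hk} x_{hk} y_{hk} = \langle X, A \odot Y \rangle$. I would then invoke the standard fact that a subspace $S$ invariant under a self-adjoint operator $\Phi$ has $\Phi$-invariant orthogonal complement: for $v \in S^\perp$ and $s \in S$ one has $\langle \Phi v, s \rangle = \langle v, \Phi s \rangle = 0$, because $\Phi s \in S$. Taking $\Phi$ to be Hadamard multiplication by $C_j$ and $S = N(T)$ (respectively $S = R(T)$), the invariances established in the previous paragraph immediately upgrade to the invariance of $N(T)^\perp$ (respectively $R(T)^\perp$). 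Together with the reduction of the first paragraph, this covers all four subspaces and all circulant doubly stochastic $C$.

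In summary, the $N(T)$ case and both complement cases are essentially formal, but the $R(T)$ case forces one to invert the permutation $P$; it is precisely the bijectivity of $P$ — guaranteed by Lemma~\ref{Cj_dominated_C_Pj} through Lemma~\ref{unknown} — that makes the range invariant, and self-adjointness of Hadamard multiplication then transfers invariance to the orthogonal complements at no extra cost.
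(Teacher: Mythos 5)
Your proof is correct and follows essentially the same route as the paper: Lemma~\ref{unknown} with the inverse permutation $P^{-1}$ handles $R(T)$ (the paper likewise decomposes $C=\sum_i r_iC_i$ and writes $C_i\odot T(B)=T(C_{P^{-1}(i)}\odot B)$), and the self-adjointness identity $\langle A\odot X,Y\rangle=\langle X,A\odot Y\rangle$ transfers invariance to the orthogonal complements exactly as in the paper's Eq.~\eqref{<C.X,Y=X,C.Y>}. The only cosmetic difference is that you reduce to the generators $C_j$ at the outset for all four subspaces, whereas the paper argues with a general circulant doubly stochastic $C$ directly for $N(T)$ and the complements.
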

\begin{proof}
For $X\in N(T)$, set $Z:=C\odot X$. Then $Z\prec_{HC} X$ and hence, $T(Z)\prec_{HC} T(X)$. Thus, there exists a circulant doubly stochastic matrix $C'$ such that $T(Z)=T(C\odot X)=C'\odot T(X)=0$. Thus, $C\odot X\in N(T).$

Next, we consider the subspace $R(T)$. Let $X = T(B)$ for some $B \in M_n$ and $C =  \sum_{i = 1}^n r_i C_i$ for some nonnegative real numbers $r_1, \ldots, r_n$ with $ \sum_{i = 1}^n r_i = 1$. Let $P$ be a permutation on $\mathbb{N}_n$ as described in Lemma \ref{unknown}. Thus, for each $j\in \mathbb{N}_n$, $T(C_{P^{-1}(j)} \odot B) = C_{j} \odot T(B)$.  Hence 
    \[C \odot X = \left( \sum_{i = 1}^n r_i C_i \right) \odot T(B) = \sum_{i = 1}^n r_i \left( C_i \odot T(B) \right) = \sum_{i = 1}^n r_i T(C_{P^{-1}(i)} \odot B) \in R(T). \] 
    
We now prove the implication for $K:=N(T)^{\perp}.$ The proof for $K:=R(T)^{\perp}$ is similar. We have, for $X = [x_{ij}], Y = [y_{ij}], Z = [z_{ij}] \in M_n$, 
\begin{equation}\label{<C.X,Y=X,C.Y>}
    \left\langle X \odot Y, Z \right\rangle = \displaystyle \sum_{i = 1}^n \sum_{j = 1}^n x_{ij} y_{ij} z_{ij} = \left\langle  X, Y \odot Z \right\rangle.
\end{equation} Now, let $X\in N(T)^\perp$. It follows, from the arguments given above, that if $Z\in N(T),$ then $C\odot Z\in N(T).$ Hence, $\left\langle C \odot X, Z \right\rangle = \left\langle X, C \odot Z \right\rangle = 0$. Therefore $C \odot X \in N(T)^{\perp},$ as required.
\end{proof}

The assumption of $C$ being a circulant doubly stochastic matrix, in the result above, is necessary; see the next example.

\begin{ex}
    Let $T\colon M_3 \rightarrow M_3$ be defined by
\[ T(X) = \begin{bmatrix}
x_{12} & x_{11} & 0 \\
0 & x_{12} & x_{22} \\
x_{33} & 0 & x_{31}  \\
\end{bmatrix},\quad X=[x_{ij}]\in M_3.  \]
It is easily verified that $$T(C_1\odot X)=C_3\odot T(X),~T(C_2\odot X)=C_2\odot T(X),~T(C_3\odot X)=C_1\odot T(X).$$ Hence, from Theorem \ref{Had_circ_maj_charcaterization)}, it follows that $T$ preserves Hadamard circulant majorization. Next, set 
\begin{center}
$X:=\begin{bsmallmatrix}
    1&0&0\\
    0&1&0\\
    0&0&0
\end{bsmallmatrix}$ and $P:=\begin{bsmallmatrix}
    1&0&0\\
    0&0&1\\
    0&1&0
\end{bsmallmatrix}$.    
\end{center}
Then $X\in R(T)$. However, $P\odot X=\begin{bsmallmatrix}
    1&0&0\\
    0&0&0\\
    0&0&0
\end{bsmallmatrix}\notin R(T).$
\end{ex}

We are now ready to prove the results mentioned in the introduction. First, we show that the adjoint is a preserver, whenever the operator is a preserver.\\

\begin{thm}\label{adj}
Let $T\colon M_n \rightarrow M_n$ be linear. If $T$ preserves Hadamard circulant majorization, then so does $T^*$.
\end{thm}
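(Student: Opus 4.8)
The plan is to reduce the statement to the functional equation for $T$ supplied by Lemma \ref{unknown} and then transport it to $T^*$ by duality, using the Hadamard-shift identity $\langle X\odot Y,Z\rangle=\langle X,Y\odot Z\rangle$ established in the proof of Lemma \ref{invariance_of N(T)_and_R(T)}. Let $P$ be the permutation furnished by Lemma \ref{unknown}, so that $T(C_j\odot B)=C_{P(j)}\odot T(B)$ for every $j\in\mathbb{N}_n$ and every $B\in M_n$. The goal is to prove that $T^*$ satisfies the companion identity
\begin{equation*}
T^*(C_j\odot B)=C_{P^{-1}(j)}\odot T^*(B)\qquad\text{for all } j\in\mathbb{N}_n,\ B\in M_n.
\end{equation*}
Once this is in hand, the conclusion is immediate: for each $C_j$ the matrix $C_{P^{-1}(j)}$ is (trivially) a circulant doubly stochastic matrix, so the identity is exactly condition (2) of the characterization in Theorem \ref{Had_circ_maj_charcaterization)} applied to $T^*$, whence $T^*$ preserves Hadamard circulant majorization.

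To establish the displayed identity I would test both sides against an arbitrary $Z\in M_n$ in the inner product. Beginning with $\langle T^*(C_j\odot B),Z\rangle=\langle C_j\odot B,T(Z)\rangle$, I would use the Hadamard-shift identity to move the factor $C_j$ across, obtaining $\langle B,C_j\odot T(Z)\rangle$. The decisive step is to recognize that $C_j\odot T(Z)=C_{P(P^{-1}(j))}\odot T(Z)=T(C_{P^{-1}(j)}\odot Z)$ by Lemma \ref{unknown}. Substituting and pushing $T$ back to $T^*$ gives $\langle T^*(B),C_{P^{-1}(j)}\odot Z\rangle$, and a second, symmetric application of the Hadamard-shift identity rewrites this as $\langle C_{P^{-1}(j)}\odot T^*(B),Z\rangle$. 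Since $Z$ is arbitrary, the identity for $T^*$ follows.

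For completeness I would also record why this identity yields the preserver property directly from the definition, which doubles as a sanity check on the permutation bookkeeping: if $X\prec_{HC}Y$, write $X=C\odot Y$ with $C=\sum_i r_iC_i$ circulant doubly stochastic, and apply $T^*$ termwise to get $T^*(X)=\bigl(\sum_i r_iC_{P^{-1}(i)}\bigr)\odot T^*(Y)$. Because $P^{-1}$ only reindexes the $C_i$ while leaving the nonnegative weights summing to $1$, the matrix $\sum_i r_iC_{P^{-1}(i)}=\sum_k r_{P(k)}C_k$ is again circulant doubly stochastic, so $T^*(X)\prec_{HC}T^*(Y)$.

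I expect no analytic difficulty here; the only delicate point is the index-chasing, namely ensuring that it is the inverse $P^{-1}$, and not $P$ itself, that appears on the $T^*$ side. This is forced by the fact that Lemma \ref{unknown} is invoked once, flanked by two applications of the Hadamard-shift identity, and getting that single inversion right is the main thing to verify carefully.
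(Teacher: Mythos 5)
Your proof is correct, and it is genuinely leaner than the paper's. The paper first decomposes $B=X+Y$ along $R(T)\oplus R(T)^{\perp}$ and the test matrix $Z=Z_1+Z_2$ along $N(T)\oplus N(T)^{\perp}$, invoking Lemma \ref{invariance_of N(T)_and_R(T)} twice: once to kill $T^*(C_k\odot Y)$ via $C_k\odot Y\in N(T^*)$, and once to show $\left\langle C_{P^{-1}(k)}\odot T^*(X),Z_1\right\rangle=0$ so that the pairing against $Z_2$ can be upgraded to a pairing against all of $Z$. You dispense with both decompositions: the chain
\[
\left\langle T^*(C_j\odot B),Z\right\rangle=\left\langle B,C_j\odot T(Z)\right\rangle=\left\langle B,T(C_{P^{-1}(j)}\odot Z)\right\rangle=\left\langle C_{P^{-1}(j)}\odot T^*(B),Z\right\rangle
\]
needs only Lemma \ref{unknown} (applied to the test matrix $Z$ rather than to $B$) and the identity \eqref{<C.X,Y=X,C.Y>}, and it already holds for every $Z$, so no orthogonality bookkeeping is required. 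The index-chasing you flag is handled correctly: $T(C_{P^{-1}(j)}\odot Z)=C_{P(P^{-1}(j))}\odot T(Z)=C_j\odot T(Z)$, which is exactly where the single inversion enters. The conclusion via Theorem \ref{Had_circ_maj_charcaterization)} (or your direct convex-combination check, which is also sound) is then immediate, since each $C_{P^{-1}(j)}$ is itself circulant doubly stochastic. What the paper's longer route buys is essentially nothing for this theorem; the decompositions it uses are the natural ones for the Moore--Penrose and Drazin arguments later in the paper, and the proof of Theorem \ref{adj} appears to have been written in the same mold. Your version is the cleaner one for this particular statement.
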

\begin{proof}
Let $C_k \in \mathscr{C}_n$ and $B \in M_n$. We write $B = X + Y$ for $X \in R(T)$, $Y \in R(T)^{\perp}$. By Lemma \ref{invariance_of N(T)_and_R(T)}, we have $C_k \odot Y \in R(T)^{\perp} = N(T^*)$ and so, $T^*(C_k \odot B) = T^*(C_k \odot X)$. Next, let $Z \in M_n$ be decomposed as $Z = Z_1 + Z_2$, with $Z_1 \in N(T)$ and $ Z_2 \in N(T)^{\perp}$. Let $P$ be a permutation on $\mathbb{N}_n$ as described in Lemma \ref{unknown}. Then,
\begin{DispWithArrows*}
    \left\langle T^*(C_k \odot X), Z \right\rangle &= \left\langle C_k \odot X, T(Z) \right\rangle \Arrow{From Eq. \eqref{<C.X,Y=X,C.Y>}}\\
    &=\left\langle X, C_k \odot T(Z) \right\rangle \\
&= \left\langle X, C_k \odot T(Z_2) \right\rangle \Arrow{From Lemma \ref{unknown}} \\
&= \left\langle X, T(C_{P^{-1}(k)} \odot Z_2) \right\rangle \\
&=\left\langle T^*(X), C_{P^{-1}(k)} \odot Z_2 \right\rangle \\
&= \left\langle C_{P^{-1}(k)} \odot T^*(X), Z_2 \right\rangle.
\end{DispWithArrows*}
Since $T^*(X) \in R(T^*) = N(T)^{\perp}$, it follows, from Lemma \ref{invariance_of N(T)_and_R(T)}, that $C_{P^{-1}(k)} \odot T^*(X) \in N(T)^{\perp}$. Therefore, $\left\langle C_{P^{-1}(k)} \odot T^*(X), Z_1 \right\rangle = 0$ and thus $$\left\langle T^*(C_k \odot X), Z \right\rangle = \left\langle C_{P^{-1}(k)} \odot T^*(X), Z \right\rangle.$$ Since $Z \in M_n$ is arbitrary, we have
\[T^*(C_k \odot B)=T^*(C_k \odot X) = C_{P^{-1}(k)} \odot T^*(X) = C_{P^{-1}(k)} \odot T^*(B). \]
By Theorem \ref{Had_circ_maj_charcaterization)}, we may now conclude that $T^*$ preserves Hadamard circulant majorization.
\end{proof}

Next, we consider the Drazin inverse.

\begin{thm}\label{Drazin}
Let $T\colon M_n \rightarrow M_n$ be a linear operator preserving Hadamard circulant majorization. Then, $T^D$ inherits that property.
\end{thm}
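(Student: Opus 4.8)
The plan is to verify the characterization in Theorem \ref{Had_circ_maj_charcaterization)} for $T^D$ by exhibiting, for each $C_k \in \mathscr{C}_n$ and $B \in M_n$, a circulant doubly stochastic matrix witnessing $T^D(C_k \odot B)$ as a Hadamard multiple of $T^D(B)$. Concretely, with $P$ the permutation furnished by Lemma \ref{unknown}, I expect to establish the clean identity $T^D(C_k \odot B) = C_{P^{-1}(k)} \odot T^D(B)$, the natural ``inverse'' analogue of Lemma \ref{unknown}; since $C_{P^{-1}(k)} \in \mathscr{C}_n$, this immediately yields the preserver property.

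The first key step is to promote the invariance result of Lemma \ref{invariance_of N(T)_and_R(T)} from $T$ to $T^m$, where $m = \Ind T$. Iterating Lemma \ref{unknown} gives $T^m(C_j \odot B) = C_{P^m(j)} \odot T^m(B)$ for all $j$ and $B$, so $T^m$ is itself a preserver of Hadamard circulant majorization (its witness being $C_{P^m(k)} \in \mathscr{C}_n$). Lemma \ref{invariance_of N(T)_and_R(T)} then applies to $T^m$, and I conclude that each of $N(T^m)$ and $R(T^m)$ is invariant under Hadamard multiplication by an arbitrary circulant doubly stochastic matrix. This is the step I expect to be the conceptual crux: without knowing that the core--nilpotent decomposition $M_n = N(T^m) \oplus R(T^m)$ is respected by the Hadamard action of circulant doubly stochastic matrices, the splitting argument below would not close.

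With this in hand, I would decompose $B = B_N + B_R$ according to $M_n = N(T^m) \oplus R(T^m)$. By the invariance just established, $C_k \odot B_N \in N(T^m)$ and $C_k \odot B_R \in R(T^m)$, so Proposition \ref{equivalence_Drazin_inverse}(1) annihilates the nullspace part: $T^D(C_k \odot B) = T^D(C_k \odot B_R)$ and likewise $T^D(B) = T^D(B_R)$. Writing $W := T^D(B_R) \in R(T^m)$, Proposition \ref{equivalence_Drazin_inverse}(2) gives $T(W) = B_R$, so by Lemma \ref{unknown} (in the form $C_k \odot T(W) = T(C_{P^{-1}(k)} \odot W)$) we obtain $C_k \odot B_R = T(C_{P^{-1}(k)} \odot W)$. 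Since $C_{P^{-1}(k)} \odot W \in R(T^m)$ by invariance, another application of Proposition \ref{equivalence_Drazin_inverse}(2) yields $T^D T(C_{P^{-1}(k)} \odot W) = C_{P^{-1}(k)} \odot W$, whence $T^D(C_k \odot B_R) = C_{P^{-1}(k)} \odot W = C_{P^{-1}(k)} \odot T^D(B)$.

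Assembling these gives $T^D(C_k \odot B) = C_{P^{-1}(k)} \odot T^D(B)$ for every $C_k \in \mathscr{C}_n$ and $B \in M_n$, and an appeal to Theorem \ref{Had_circ_maj_charcaterization)} completes the proof. The only routine verifications left are the iteration identity for $T^m$ and the bookkeeping of which part of $B$ survives under $T^D$; the substantive content is entirely in transferring the invariance lemma to $T^m$.
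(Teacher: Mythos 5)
Your proposal is correct and follows essentially the same route as the paper: show $T^m$ is itself a preserver so that Lemma \ref{invariance_of N(T)_and_R(T)} applies to $N(T^m)$ and $R(T^m)$, split $B$ by the core--nilpotent decomposition, kill the nullspace part, and derive $T^D(C_k \odot B) = C_{P^{-1}(k)} \odot T^D(B)$ before invoking Theorem \ref{Had_circ_maj_charcaterization)}. The only (harmless) difference is cosmetic: you write $B_R = T(W)$ with $W = T^D(B_R)$ and pull the circulant through a single application of $T$, whereas the paper writes $Y = T^{m+1}(Z)$ and pulls it through $T^{m+1}$ using the iterated permutation $P^{m+1}$.
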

\begin{proof}
Let $\text{Ind } T = m$. Let $C_k \in \mathscr{C}_n$ and $B \in M_n$, with $B = X + Y$ for some $X \in N(T^m)$ and $Y \in R(T^m)$. It is easy to see that, if $T$ preserves Hadamard circulant majorization, then so does $T^m.$ Hence, from Lemma \ref{invariance_of N(T)_and_R(T)} and Proposition \ref{equivalence_Drazin_inverse},
\begin{align*}
T^D(C_k \odot B) &= T^D(C_k \odot X) + T^D(C_k \odot Y) \\ & = T^D(C_k \odot Y)\\ &= (T|_{R(T^m)})^{-1}(C_k \odot Y). 
\end{align*}
Since $Y \in R(T^m) = R(T^{m+1})$, it follows, from $M_n = N(T^m) \oplus R(T^m)$, that $Y = T^{m+1}(Z)$ for some $Z \in R(T^m)$. Let $P$ be the permutation on $\mathbb{N}_n$ as described in Lemma \ref{unknown}. Then
\begin{align*}
(T|_{R(T^m)})^{-1}(C_k \odot T^{m+1}(Z))&= (T|_{R(T^m)})^{-1}(T^{m+1}(C_{(P^{m+1})^{-1}(k)} \odot Z)) \\
&= T^m (C_{(P^{m+1})^{-1}(k)} \odot Z) \\
&= C_{P^{-1}(k)} \odot T^m(Z).
\end{align*}
Also, we have 
\begin{align*}
T^D(B) & = T^D(X) + T^D(Y)\\ & = T^D(Y) \\ & = T^DT(T^m(Z)) \\& = T^m(Z).     
\end{align*}
Hence
\begin{align*}
T^D(C_k \odot B) = C_{P^{-1}(k)} \odot T^D(B).
\end{align*}
From Theorem \ref{Had_circ_maj_charcaterization)}, it then follows  that $T^D$ preserves Hadamard circulant majorization.
\end{proof}

The following consequence for the group inverse is immediate. \\

\begin{cor}\label{gr}
    Let $T\colon M_n\rightarrow M_n$ be a group invertible linear operator. If $T$ preserves the Hadamard circulant majorization, then so does $T\hash$.
\end{cor}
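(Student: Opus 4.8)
The plan is to deduce this corollary directly from Theorem \ref{Drazin} by recognizing the group inverse as a special instance of the Drazin inverse. Recall, from the discussion preceding Proposition \ref{equivalence_Drazin_inverse}, that when $\text{Ind } T = 1$ the Drazin inverse $T^D$ is given the special name group inverse and is written $T\hash$. Thus the first step is to observe that the hypothesis of group invertibility forces $\text{Ind } T \leq 1$ (the extreme case $\text{Ind } T = 0$, where $T$ is invertible or zero, also falling under this description), so that $T\hash$ exists and coincides with $T^D$.

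With this identification in hand, the argument is immediate. Theorem \ref{Drazin} guarantees that $T^D$ preserves Hadamard circulant majorization whenever $T$ does, and since $T\hash = T^D$, the same conclusion holds verbatim for $T\hash$. Invoking Theorem \ref{Had_circ_maj_charcaterization)} is then unnecessary, as the preserver property has already been established for $T^D$.

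There is essentially no obstacle to overcome here, since all of the substantive work has already been carried out in the proof of Theorem \ref{Drazin}. It is perhaps worth remarking that, in the group-invertible case $m = 1$, the machinery of that proof simplifies considerably: the decomposition $M_n = N(T^m) \oplus R(T^m)$ becomes $M_n = N(T) \oplus R(T)$, the restriction $T|_{R(T^m)}$ becomes $T|_{R(T)}$, and the index-shifted permutation $P^{m+1}$ reduces to $P^2$. One could therefore give a self-contained proof mirroring Theorem \ref{Drazin} with these substitutions, but invoking the theorem directly is cleaner and is the route I would take.
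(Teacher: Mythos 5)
Your proposal is correct and matches the paper's intent exactly: the paper offers no separate proof, stating only that the corollary is an immediate consequence of Theorem \ref{Drazin}, which is precisely the identification $T\hash = T^D$ (when $\operatorname{Ind} T \leq 1$) that you make. Your additional remarks on how the proof of Theorem \ref{Drazin} specializes when $m=1$ are accurate but not needed.
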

Here is an example to illustrate Corollary \ref{gr}.
\begin{ex}
Let $T: M_3 \rightarrow M_3$ be defined by
\[ T(X) = \begin{bmatrix}
x_{12} + x_{23} + x_{31} & x_{11} & 0 \\
0 & x_{12} - x_{23} + x_{31} & x_{22} \\
x_{33} & 0 & x_{31} \\
\end{bmatrix},\quad X=[x_{ij}]\in M_3. \]
It is  easily verified that $T\hash$ exists and is given by
\[ T^{\#}(X) = \begin{bmatrix}
x_{12} & \frac{(x_{11} + x_{22})}{2} - x_{33} & 0 \\
0 & x_{23} & \frac{(x_{11} - x_{22})}{2} \\
x_{33} & 0 & x_{31} \\
\end{bmatrix}, \quad X\in M_3. \] Further, observe that if $S:=T$ or $S:=T^{\#}$, then $S$ satisfies 
$$S(C_1\odot X)=C_3\odot S(X),~S(C_2\odot X)=C_2\odot S(X),~S(C_3\odot X)=C_1\odot S(X).$$
Therefore, both $T$ and $T^{\#}$ preserve Hadamard circulant majorization, by Theorem \ref{Had_circ_maj_charcaterization)}.
\end{ex}
\vspace{0.3cm}

We conclude this note with the result for the Moore-Penrose inverse.\\

\begin{thm}\label{MP}
Let $T: M_n \rightarrow M_n$ be linear. If $T$ preserves Hadamard circulant majorization, then so does $T^{\dagger}$.
\end{thm}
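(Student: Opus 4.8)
The plan is to adapt the argument used for the Drazin inverse in Theorem~\ref{Drazin}, now invoking the Moore-Penrose characterization recorded in Proposition~\ref{MP_equivalence_defn}. The key structural fact is that $N(T^\dagger) = R(T)^\perp$ and $R(T^\dagger) = N(T)^\perp$, so the natural decomposition to exploit is the orthogonal splitting $M_n = R(T) \oplus R(T)^\perp$. Accordingly, I would fix $C_k \in \mathscr{C}_n$ and $B \in M_n$, and write $B = X + Y$ with $X \in R(T)$ and $Y \in R(T)^\perp$. Letting $P$ denote the permutation furnished by Lemma~\ref{unknown}, the goal is to establish the identity $T^\dagger(C_k \odot B) = C_{P^{-1}(k)} \odot T^\dagger(B)$, after which Theorem~\ref{Had_circ_maj_charcaterization)} delivers the conclusion.

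First I would dispose of the $Y$-component. By Lemma~\ref{invariance_of N(T)_and_R(T)} applied to the subspace $R(T)^\perp$, we have $C_k \odot Y \in R(T)^\perp = N(T^\dagger)$, and hence $T^\dagger(C_k \odot Y) = 0$ by part (2) of Proposition~\ref{MP_equivalence_defn}. Thus $T^\dagger(C_k \odot B) = T^\dagger(C_k \odot X)$. To treat the $X$-component, I would choose the unique $Z \in N(T)^\perp$ with $T(Z) = X$, which is possible because $T$ restricts to a bijection from $N(T)^\perp$ onto $R(T)$. Using Lemma~\ref{unknown} in the form $T(C_{P^{-1}(k)} \odot Z) = C_k \odot T(Z)$, I then obtain $C_k \odot X = T(C_{P^{-1}(k)} \odot Z)$.

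Applying $T^\dagger$ and using part (1) of Proposition~\ref{MP_equivalence_defn}, the heart of the computation is the observation that $C_{P^{-1}(k)} \odot Z \in N(T)^\perp$, which is exactly what Lemma~\ref{invariance_of N(T)_and_R(T)} guarantees for the subspace $N(T)^\perp$, since $Z \in N(T)^\perp$. Consequently $T^\dagger(C_k \odot X) = T^\dagger T(C_{P^{-1}(k)} \odot Z) = C_{P^{-1}(k)} \odot Z$. On the other hand, $T^\dagger(B) = T^\dagger(X) = T^\dagger T(Z) = Z$ by the same idempotency relation, so that $T^\dagger(C_k \odot B) = C_{P^{-1}(k)} \odot Z = C_{P^{-1}(k)} \odot T^\dagger(B)$, as desired.

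As in the Drazin case, there is no deep obstacle here; the proof is essentially routine bookkeeping once the right subspace decomposition is chosen. The only point demanding care is ensuring that the invariance Lemma~\ref{invariance_of N(T)_and_R(T)} is applied to the correct one of the four subspaces at each stage---$R(T)^\perp$ to annihilate the $Y$-term and $N(T)^\perp$ to keep $C_{P^{-1}(k)} \odot Z$ inside the domain on which $T^\dagger T$ acts as the identity---and that the permutation index is consistently $P^{-1}(k)$, matching the single power of $T$, in contrast to the $(P^{m+1})^{-1}$ appearing for $T^D$.
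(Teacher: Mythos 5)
Your proposal is correct and follows essentially the same route as the paper: the same decomposition $B = X + Y$ with $X \in R(T)$, $Y \in R(T)^{\perp}$, the same use of Lemma~\ref{invariance_of N(T)_and_R(T)}, Lemma~\ref{unknown} and Proposition~\ref{MP_equivalence_defn}, and the same final identity $T^{\dagger}(C_k \odot B) = C_{P^{-1}(k)} \odot T^{\dagger}(B)$. If anything, you are slightly more explicit than the paper in checking that $C_{P^{-1}(k)} \odot Z$ stays in $N(T)^{\perp}$ before invoking part (1) of Proposition~\ref{MP_equivalence_defn}, which is a point the paper leaves implicit.
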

\begin{proof}
Let $C_k \in \mathscr{C}_n$ and $B \in M_n$ with $B = X + Y$ for some $X \in R(T)$, $Y \in R(T)^{\perp}$. By Lemma \ref{invariance_of N(T)_and_R(T)} applied to $K:=R(T)^\perp$, $C_k \odot Y \in R(T)^{\perp}$, and so, $T^{\dagger}(C_k \odot B) = T^{\dagger}(C_k \odot X)$. Since $X \in R(T)$, it follows, from $M_n = N(T) \oplus N(T)^{\perp}$, that $X = T(A)$ for some $A \in N(T)^{\perp}$. Let $P$ be a permutation on $\mathbb{N}_n$ as described in Lemma \ref{unknown}. Then \begin{DispWithArrows*}
    T^{\dagger}(C_k \odot X) &= T^{\dagger}(C_k \odot T(A))\Arrow{From Lemma \ref{unknown}}\\
    &= T^{\dagger}T(C_{P^{-1}(k)} \odot A)\\
    &= C_{P^{-1}(k)} \odot A\Arrow{From Proposition \ref{MP_equivalence_defn}}\\ 
    &= C_{P^{-1}(k)} \odot T^{\dagger}(X)\\
    &= C_{P^{-1}(k)} \odot T^{\dagger}(B).
\end{DispWithArrows*}
It follows that $T^{\dagger}$ preserves Hadamard circulant majorization, by using Theorem \ref{Had_circ_maj_charcaterization)}. 
\end{proof}

\begin{rem}
    It is easy to see that, if $T_1,T_2\colon M_n\rightarrow M_n$ preserve Hadamard circulant majorization, then so does $T_1\circ T_2$. For a self adjoint operator $T\colon M_n\rightarrow M_n$, we know that the group inverse always exists and $T\hash=T^\dagger$. This, along with the formula $T^\dagger=T^*(TT^*)^\dagger$, yields an alternate proof of Theorem \ref{MP}, suggested by the referee, as follows.
    Let $T\colon M_n\rightarrow M_n$ preserve Hadamard circulant majorization. From Theorem \ref{adj}, $T^*$ preserves and hence $TT^*$ preserves. Since $TT^*$ is self adjoint, $(TT^*)^\dagger=(TT^*)\hash$. Hence, from Corollary \ref{gr}, $(TT^*)^\dagger$ preserves. Therefore, $T^\dagger=T^*(TT^*)^\dagger$ preserves Hadamard circulant majorization.\\
\end{rem}

Here is an illustration of Theorem \ref{MP}.\\

\begin{ex}\label{Hadmarad_maj_counterexample}
Let $T\colon M_3 \rightarrow M_3$ be defined by $T(X) := x_{11} C_1+ x_{12} C_2 + x_{13} C_3$ for $X = \left[ x_{ij} \right] \in M_3$. Then, for $Y=[y_{ij}]\in M_3$, 
\[ T^{\dagger}(Y) = \frac{1}{3} \left( (y_{12} + y_{23} + y_{31}) E_{11} + (y_{13} + y_{21} + y_{32}) E_{12} + (y_{11} + y_{22} + y_{33}) E_{13} \right). \]
One may verify that the following hold: 
$T(C_1 \odot X) = C_2 \odot T(X), T(C_2 \odot X) = C_3 \odot T(X), T(C_3 \odot X) = C_1 \odot T(X), T^{\dagger}(C_1 \odot X) = C_3 \odot T^{\dagger}(X), T^{\dagger}(C_2 \odot X) = C_1 \odot T^{\dagger}(X)$ and $T^{\dagger}(C_3 \odot X) = C_2 \odot T^{\dagger}(X)$. By Theorem  \ref{Had_circ_maj_charcaterization)}, we may conclude that both $T$ and $T^{\dagger}$ preserve Hadamard circulant majorization.

\end{ex}

\vspace{.7cm}

{\bf Acknowledgements}:\\
The authors thank the referees for suggestions that have led to an improved presentation. P.R. Raickwade acknowledges funding received from the Prime Minister’s Research Fellowship (PMRF), Ministry of Education, Government of India, for carrying out this work.\\

{\bf Declarations}:\\
{\bf Conflict of interest}: The authors declare that they have no conflict of interest.

\end{document}